\newtheorem{theorem}{Theorem}[section]
\newtheorem{corollary}[theorem]{Corollary}
\theoremstyle{definition}
\theoremstyle{remark}
\newtheorem{remark}[theorem]{Remark}
\numberwithin{equation}{section}
\newcommand{\w}{\widetilde}
\newcommand{\N}{{\mathbb{N}}}
\newcommand{\Z}{{\mathbb{Z}}}
\renewcommand{\O}{{\mathcal{O}}}
\newcommand{\cstar}{\mbox{$C^*$}}
\newcommand{\cspan}{\mathop{\overline{\textrm{span}}}}
\newcommand{\p}[2]{\langle #1,#2\rangle}
\newcommand{\Sum}{\mathop{\mbox{$\sum$}}}
\renewcommand{\mod}{\mathop{\textrm{mod}}}
\renewcommand{\Im}{\mathop{\textrm{Im}}}
\begin{document}
\setcounter{page}{1}

\title[Green's theorem for crossed products by Hilbert $\cstar$-bimodules]{Green's theorem for crossed products by Hilbert $\cstar$-bimodules}
\author[M. Achigar]{Mauricio Achigar}
\address{Centro Universitario Litoral Norte, Gral. Rivera 1350, CP 50000, Salto, Uruguay.}
\email{\textcolor[rgb]{0.00,0.00,0.84}{mauricio.achigar3@gmail.com}}
\subjclass[2010]{Primary 46L08; Secondary 46L55, 46L05.}
\keywords{Green's theorem, Morita equivalence, crossed product, Hilbert module.
}

\date{Received: xxxxxx; Revised: yyyyyy; Accepted: zzzzzz. \newline\indent  Partially supported by Proyecto Fondo Clemente Estable FCE2007\!\_731.
}

\begin{abstract}
Green's theorem gives a Morita equivalence $C_0(G/H,A)\rtimes G\sim A\rtimes H$ for a closed subgroup $H$ of a locally compact group $G$ acting on a $C^*$-algebra $A$. We prove an analogue of Green's theorem in the case $G=\Z$, where the automorphism generating the action is replaced by a Hilbert $C^*$-bimodule. 
\end{abstract} \maketitle

\section{Introduction}

The crossed product $A\rtimes X$ of a $C^*$-algebra $A$ by a Hilbert $A-A$ bimodule $X$, as defined in \cite{AEE}, is a generalization of the crossed product $A\rtimes_{\alpha}\Z$ of $A$ by an automorphism $\alpha$ of $A$. 
Given an automorphism $\alpha$ of $A$ one can twist the trivial bimodule $_AA_A$ replacing the right structure by defining $x\cdot_{\alpha}a=x\alpha(a)$ and $\langle x,y\rangle^{\alpha}_R=\alpha^{-1}(a^*b)$ for $a,x,y\in A$, to get a $C^*$-bimodule, denoted by $A_{\alpha}$, satisfying $A\rtimes_{\alpha}\Z\cong A\rtimes A_{\alpha}$ canonically.

Green's theorem, as stated in \cite[Theorem 4.22]{W}, gives a Morita equivalence $C_0(G/H,A)\rtimes G\sim A\rtimes_{\alpha|_H}H$ for a general locally compact $C^*$-dynamical system $(A,G,\alpha)$ and a closed subgroup $H\leq G$. In the special case $G=\Z$, $H=n\Z$, for $n\in\N$, we have $G/H=\Z_n$ so that $C_0(G/H,A)=C_0(\Z_n,A)\cong A^n$ ($n$-fold direct sum) and $(A,H,\alpha|_H)=(A,n\Z,\alpha|_{n\Z})\cong(A,\Z,\alpha^n)$ so that $A\rtimes_{\alpha|_H}H\cong A\rtimes_{\alpha^n}\Z$, where $\alpha$ also denotes the single automorphism generating the action of $\Z$ on $A$, and $\alpha^n$ its $n$-th composition power. Then, for this special case, we have the Morita equivalence $A^n\rtimes_{\sigma}\Z\sim A\rtimes_{\alpha^n}\Z$ for a certain action $\sigma$ on $A^n$. Translating this into the $C^*$-bimodule language we get $A^n\rtimes A^n_{\sigma}\sim A\rtimes A_{\alpha^n}\cong A\rtimes [A_{\alpha}]^{\otimes n}$, where we use the isomorphism $A_{\alpha^n}\cong [A_{\alpha}]^{\otimes n}$ ($n$-fold tensor product).

In this context, we show that one can replace $A_{\alpha}$ by a general right full Hilbert $A-A$ bimodule $X$ and establish a Morita equivalence of the form 
  $$A^n\rtimes X^n_{\sigma}\sim A\rtimes X^{\otimes n}.$$ 

We obtain this as a consequence of Theorem \ref{elteo}, which states a Morita equivalence of the form
  $$(A_1\oplus\cdots\oplus A_n)\rtimes (X_1\oplus\cdots\oplus X_n)_{\sigma}\sim A_1\rtimes(X_1\otimes\cdots\otimes X_n),$$
for a ``cycle'' of bimodules $_{A_1}{X_1}_{A_2},{_{A_2}}{X_2}_{A_3},\ldots,{_{A_{n-1}}}{X_{n-1}}_{A_n},{_{A_n}}{X_n}_{A_1}$, the especial case $A_i=A$, $X_i=X$, $i=1,\ldots,n$, giving the desired result.

\section{Preliminaries}

\subsection{\it $C^*$-modules, $C^*$-bimodules, equivalence bimodules and fullness.} 
A \emph{right Hilbert $B$-module} $X_B$ is defined as a vector space $X$ equipped with a right action of the $C^*$-algebra $B$ and a $B$-valued right inner product, which is complete with respect to the induced norm. A left Hilbert $A$-module $_AX$ is defined analogously. A \emph{Hilbert $A-B$ bimodule} $_AX_B$ is a vector space $X$ with left and right compatible Hilbert $C^*$-module structures over $C^*$-algebras $A$ and $B$, respectively. Compatibility means that $\langle x,y\rangle _L\cdot z=x\cdot\langle y,z\rangle_R$, for all $x,y,z\in X$.
We say that a Hilbert $A-B$ bimodule is \emph{right full} if $\langle X,X\rangle_R=B$, where $\langle X,X\rangle_R=\cspan(\{\langle x,y\rangle_R:x,y\in X\})$, $\cspan$ denoting the closed linear spanned set. Left fullness is defined analogously. Finally, an \emph{equivalence bimodule} is a Hilbert $A-B$ bimodule $_AX_B$ which is right full and left full. When an equivalence bimodule $_AX_B$ exists the $C^*$-algebras $A$ and $B$ are said to be \emph{Morita equivalent}, a situation denoted $A\sim B$. See \cite{La} for reference.

\subsection{\it Operations with subspaces.} For linear subspaces $X, X_1,\ldots,X_n$ of a fixed normed $*$-algebra $C$ we define
\begin{center}
  $\Sum X_i\equiv X_1+X_2+\cdots +X_n\equiv\overline{\bigl\{ x_1+x_2+\cdots+x_n:x_i\in X_i\bigr\}},$\\ \vspace{1mm}
  $\textstyle \prod X_i\equiv X_1X_2\cdots X_n\equiv\overline{\bigl\{\Sum_kx_{1k}x_{2k}\cdots x_{nk}:x_{ik}\in X_i\bigr\}}, \quad X^*\equiv\bigl\{x^*:x\in X\bigr\}.$
\end{center}
If $Y_1,\ldots,Y_n$ is another family of subspaces and $\overline{X_i}=\overline{Y_i}$ for $i=1,\ldots,n$ then $\sum X_i=\sum Y_i$ and $\prod X_i=\prod Y_i$. Consequently, equalities of the form $XY=\overline{X}Y$, $X+Y=\overline{X}+Y$, etc. hold for subspaces $X$ and $Y$. Also, the following  properties are easily checked for subspaces $X,Y,Z$.
  \begin{center}
  \begin{minipage}{.85\linewidth}
    1. $(X+Y)+Z=X+Y+Z=X+(Y+Z)$,
    \hfill2. $X+Y=Y+X$,\\
    3. $(XY)Z=XYZ=X(YZ)$,
    \hfill4. $X(Y+Z)=XY+XZ$,\\
    5. $(X+Y)^*=X^*+Y^*$,\hfill6. $(XY)^*=Y^*X^*$,
    \hfill7. $(X^*)^*=X$.
  \end{minipage}
  \end{center}
For a general family of subspaces $\{X_i\}_{i\in I}$ we extend the definition of sum as
  $$\textstyle\sum X_i\equiv\overline{\bigl\{\sum_{i\in I_0}x_i:I_0\subseteq I\text{ finite},x_i\in X_i\bigr\}}.$$
For every such a family and a subspace $X$ we have
\begin{center}
 8. $X(\sum X_i)=\sum XX_i$,\qquad9. $\bigl(\sum X_i\bigr)^*=\sum X_i^*$.
\end{center}

\subsubsection{}\label{power}
Let $C$ be a fixed normed $*$-algebra, $A\subseteq C$  a $*$-subalgebra and $X\subseteq C$ a linear subspace such that
  $$\text{1. }AX\subseteq X,\quad\text{2. }XA\subseteq X,\quad\text{3. }X^*X\subseteq A,\quad\text{4. }XX^*\subseteq A.$$
For $k\in\Z$, we define $X^k=XX\cdots X$ ($k$ times) if $k\geq1$, $X^0=A$ and $X^k=(X^*)^{-k}$ if $k\leq-1$. We have $X^kX^l\subseteq X^{k+l}$ for all $k,l\in\Z$, and $X^kX^l=X^{k+l}$ if $kl>0$. Denote with $A[X]$ the closed $*$-subalgebra of $C$ generated by $A\cup X$. That is
  $$\textstyle A[X]=C^*(A\cup X)=\sum_{k\in\Z}X^k.$$

\subsubsection{}\label{conmut} With $A,X\subseteq C$ as before, let $B\subseteq C$ be a $*$-subalgebra. Note that 
\begin{center}
 {\it if $BA=AB$ and $BX=XB$ then $BA[X]=A[X]B$.}
\end{center}
  Indeed, in this case $BX^k=X^kB$ for all $k\in\Z$ and then 
  $$\textstyle BA[X]=B\Sum_kX^k=\Sum_kBX^k=\Sum_kX^kB=A[X]B.$$ 
  In a similar fashion, we can prove that
\begin{center}
  {\it if $BA=A$ and $BX=X$ then $BA[X]=A[X]$.}
\end{center}

\subsubsection{}\label{power2} If in the context of \ref{power} $C$ is a $C^*$-algebra and $A$ and $X$ are closed, then $A$ is a $C^*$-algebra and $X$ a Hilbert $A-A$ bimodule with the operations given by the restriction of the trivial Hilbert $C-C$ bimodule structure of $C$. Then we have $AX=X$ and $XA=X$ because both actions are automatically non-degenerate. Moreover, if we assume that $X$ is right full, that is $X^*X=A$, then we have $X^{-k}X^l=X^{l-k}$ for $k,l\geq0$.

\subsection{\it Crossed product by a Hilbert bimodule.} Crossed products of $C^*$-algebras by Hilbert bimodules are introduced in \cite{AEE}. We summarize here their definition and principal properties. 

\subsubsection{Covariant pairs.} Given a Hilbert $A-A$ bimodule $X$ and a $C^*$-algebra $C$ a {\emph covariant pair} from $_AX_A$ to $C$ is a pair of maps $(\varphi,\psi)$ where $\varphi\colon A\to C$ is a $*$-morphism and $\psi\colon X\to C$ a linear map satisfying
\begin{center}
 $\text{1. }\psi(a\cdot x)=\varphi(a)\psi(x),\quad\text{2. }\varphi(\langle x,y\rangle_L)=\psi(x)\psi(y)^*,$\\
 $\text{3. }\psi(x\cdot a)=\psi(x)\varphi(a),\quad\text{4. }\varphi(\langle x,y\rangle_R)=\psi(x)^*\psi(y),$
\end{center}
for all $a\in A$, $x,y\in X$. That is, the pair preserves the Hilbert bimodule structure considering on $C$ the trivial Hilbert $C-C$ bimodule structure.

\subsubsection{The crossed product.}\label{univprop} A crossed product of a $C^*$-algebra $A$ by a Hilbert $A-A$ bimodule $X$ is a $C^*$-algebra $A\rtimes X$ (denoted $A\rtimes_X\Z$ in \cite{AEE}) together with a covariant pair $(\iota_A,\iota_X)$ from $_AX_A$ to $A\rtimes X$ satisfying the following universal property: {\it for any covariant pair $(\varphi,\psi)$ from $_AX_A$ to a $C^*$-algebra $C$ there exists a unique $*$-morphism $\varphi\rtimes\psi\colon A\rtimes X\to C$ such that $\varphi=(\varphi\rtimes\psi)\circ\iota_A$ and $\psi=(\varphi\rtimes\psi)\circ\iota_X$.}

\subsubsection{Basic properties.}\label{CPprops} The crossed product exists and is unique up to isomorphism. The maps $\iota_A$ and $\iota_X$ are injective, so that we may consider $A,X\subseteq A\rtimes X$ and the induced $*$-morphism $\varphi\rtimes\psi$ as an extension of the covariant pair $(\varphi,\psi)$. Moreover, for any covariant pair $(\varphi,\psi)$ we have that $\Im\varphi\rtimes\psi=C^*(\Im\varphi\cup\Im\psi)$ and that $\varphi\rtimes\psi$ is injective if $\varphi$ is.

\section{The main theorem.}

\subsection{\it Twisting Hilbert modules.} If $X_B$ is a right Hilbert $B$-module, $C$ a $C^*$-algebra and $\sigma\colon C\to B$ a $*$-isomorphism, then we denote $X_{\sigma}$ the right Hilbert module over $C$ obtained by considering on the vector space $X$ the operations
$$x\cdot_{\sigma}c=x\cdot\sigma(c)\quad\text{and}\quad\p xy^{\sigma}=\sigma^{-1}(\p xy)\qquad\text{for }c\in C,\,x,y\in X.$$
If in addition $X$ is a Hilbert $A-B$ bimodule then $X_{\sigma}$ is a Hilbert $A-C$ bimodule with the original left structure. The module $X$ is right full iff $X_{\sigma}$ is.

\subsection{\it The twisted sum of a cycle of Hilbert bimodules.} \label{twisted} Given Hilbert bimodules $_{A_i}{X_i}_{B_i}$ for $i=1,\ldots,n$, we have that $\bigoplus X_i$ is a Hilbert $\bigoplus A_i-\bigoplus B_i$ bimodule with point-wise operations. The bimodule $\bigoplus X_i$ is right full iff $X_i$ is for all $i=1,\ldots,n$.

Now, given a ``cycle'' of Hilbert bimodules $_{A_1}{X_1}_{A_2}$, $_{A_2}{X_2}_{A_3}$, $\ldots$, $_{A_n}{X_n}_{A_1}$ we can make $\bigoplus X_i$ into a Hilbert bimodule over $\bigoplus A_i$ twisting the right action in the previous constriction with the isomorphism $\sigma\colon A_1\oplus A_2\oplus\cdots\oplus A_n\to A_2\oplus\cdots\oplus A_n\oplus A_1$ given by
  $$\sigma(a_1,a_2,\ldots,a_n)=(a_2,\ldots,a_n,a_1),\qquad\text{for }a_k\in A_k.$$ 

\begin{theorem}\label{elteo}
Let $_{A_1}{X_1}_{A_2}$, $_{A_2}{X_2}_{A_3}$, $\ldots$, $_{A_n}{X_n}_{A_1}$ be right full Hilbert bimodules and consider their twisted sum $(X_1\otimes\cdots\otimes X_n)_{\sigma}$ as in \ref{twisted}. Then we have the following Morita equivalence
  $$A_1\rtimes(X_1\otimes\cdots\otimes X_n)\sim(A_1\oplus\cdots\oplus A_n)\rtimes (X_1\oplus\cdots\oplus X_n)_{\sigma}.$$
\end{theorem}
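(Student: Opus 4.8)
The plan is to realise both crossed products inside the single algebra $D:=(A_1\oplus\cdots\oplus A_n)\rtimes(X_1\oplus\cdots\oplus X_n)_{\sigma}$. Write $B=A_1\oplus\cdots\oplus A_n$ and $Y=(X_1\oplus\cdots\oplus X_n)_{\sigma}$, and regard $B,Y\subseteq D$ through the canonical covariant pair. Since the left and right $B$-actions on $Y$ are nondegenerate, $BD=D$, so $M(B)\subseteq M(D)$; let $p_i\in M(B)\subseteq M(D)$ be the projection onto the $i$-th summand of $B$, so that $p_1+\cdots+p_n=1$. The first step is to read off, from the explicit formula for $\sigma$, the corner structure of $D$: the left action of the summand $A_i$ of $B$ is carried by the $X_i$-summand of $Y$, while the twisted right action of the summand $A_j$ is carried by the $X_{j-1}$-summand (indices mod $n$); hence $p_iYp_j=0$ unless $j\equiv i+1\pmod n$, and $p_iYp_{i+1}$ is the copy of $X_i$ sitting inside $Y$. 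Going once around the cycle $p_1\to p_2\to\cdots\to p_n\to p_1$, the degree-$n$ part of the $\Z$-grading $D=\sum_{k\in\Z}D_k$ ($D_k=\cspan(Y^k)$, $D_0=B$) satisfies $p_1D_np_1=\cspan\{\widehat{x_1}\cdots\widehat{x_n}:x_i\in X_i\}$, where $\widehat{x}$ denotes $x$ placed in the appropriate coordinate of $Y$; more generally $p_1D_kp_1=0$ unless $n\mid k$, so restricting the grading (equivalently the gauge action, which fixes $p_1$) to the corner gives $p_1Dp_1=\sum_{m\in\Z}p_1D_{nm}p_1$.

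Next I would show that $p_1$ is a full projection. The ideal $\overline{Dp_1D}$ contains $A_1=p_1Bp_1$, hence $X_1=A_1X_1$, hence $A_2=\langle X_1,X_1\rangle_R$ by right fullness of $X_1$, and then inductively all of the $X_i$ and all of the $A_i$; thus it contains $B$, hence $\overline{BD}=D$. Therefore $p_1D$ is a $p_1Dp_1$--$D$ imprimitivity bimodule and $p_1Dp_1\sim D$.

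The main step is to identify $p_1Dp_1$ with $A_1\rtimes(X_1\otimes\cdots\otimes X_n)$. Put $Z=X_1\otimes_{A_2}\cdots\otimes_{A_n}X_n$, a right full Hilbert $A_1$-$A_1$ bimodule. Define $\varphi\colon A_1\to p_1Dp_1$ to be the inclusion $A_1\subseteq B\subseteq D$ (its range lies in $p_1Dp_1$), and $\psi\colon Z\to p_1Dp_1$ on elementary tensors by $\psi(x_1\otimes\cdots\otimes x_n)=\widehat{x_1}\cdots\widehat{x_n}$. Using the covariance relations for the pair $(B,Y)$ in $D$ together with the bookkeeping of the first step --- for $a\in A_{i+1}$ the element $\widehat{x_ia}$ equals $\widehat{x_i}$ right-multiplied by $a$ viewed in the $(i+1)$-st summand of $B$, and $\widehat{x_i}^{*}\widehat{y_i}=\langle x_i,y_i\rangle_R$ viewed in the $(i+1)$-st summand of $B$ --- one checks that $\psi$ is balanced over each $A_i$ (hence well defined on $Z$) and that $(\varphi,\psi)$ is a covariant pair from ${}_{A_1}Z_{A_1}$ to $p_1Dp_1$; the needed estimate $\|\psi(\zeta)\|\le\|\zeta\|_Z$ is precisely the covariance identity $\varphi(\langle\zeta,\zeta\rangle_R)=\psi(\zeta)^{*}\psi(\zeta)$. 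The universal property then yields $\Phi=\varphi\rtimes\psi\colon A_1\rtimes Z\to p_1Dp_1$, which is injective because $\varphi$ is, and whose image is $C^{*}(\varphi(A_1)\cup\psi(Z))$. By the description of the corner from the first step --- $p_1D_0p_1=A_1$, $p_1D_{nm}p_1=\cspan\{\psi(\zeta_1)\cdots\psi(\zeta_m):\zeta_i\in Z\}$ for $m\ge1$, and the adjoints of these for $m\le-1$ --- this image exhausts $p_1Dp_1$, so $\Phi$ is an isomorphism. Combined with the previous step this gives $A_1\rtimes(X_1\otimes\cdots\otimes X_n)\cong p_1Dp_1\sim D$, which is the assertion.

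I expect the main obstacle to be precisely this last identification: the index bookkeeping showing that the twist $\sigma$ reassembles the summands of $Y$ into the cyclic chain $p_1\to\cdots\to p_n\to p_1$, so that the $(1,1)$-corner of the $n$-fold product is exactly the internal tensor product $X_1\otimes\cdots\otimes X_n$, and the attendant verification of the four covariance identities under these identifications. It also relies --- via \cite{AEE} --- on the standard fact that a crossed product by a Hilbert bimodule is topologically $\Z$-graded with degree-zero part the coefficient algebra and carries a gauge action fixing each $p_i$, which is what legitimises both the description of the corner and the surjectivity of $\Phi$.
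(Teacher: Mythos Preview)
Your argument is correct and rests on the same underlying idea as the paper's: both locate $A_1\rtimes(X_1\otimes\cdots\otimes X_n)$ as the corner $p_1Dp_1$ of the big crossed product $D$ (the paper writes this corner as $B=A_1[A_1X^n]=C^*(A_1\cup A_1X^n)$, working with the subalgebras $A_k$ in place of multiplier projections), and both identify it via exactly the covariant pair $(\varphi,\psi)$ you define. The difference is in how the Morita equivalence $p_1Dp_1\sim D$ is obtained. You show $p_1$ is full and invoke the standard full-corner theorem, using the $\Z$-grading (gauge action) to read off the corner and confirm surjectivity of $\Phi$. The paper instead builds the imprimitivity bimodule by hand as the row $Z=(B,BX,\ldots,BX^{n-1})\subseteq M_{1\times n}(C)$ --- this is exactly your $p_1D$ split as $\bigoplus_j p_1Dp_j$ --- computes $ZZ^*=B$ directly, and then, rather than citing fullness, exhibits a second covariant pair from $(A,X)$ into $M_n(C)$ and verifies entry by entry that its image is $Z^*Z$, thereby proving $Z^*Z\cong C$. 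Your route is shorter and leans on standard machinery (multiplier algebras, full corners, the topological $\Z$-grading of $A\rtimes X$); the paper's route is more self-contained, carrying out all computations with the closed-subspace calculus of its \S2.2 and never leaving the concrete algebras $C$ and $M_n(C)$.
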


\begin{proof}
Let $A=A_1\oplus\cdots\oplus A_n$, $X=(X_1\oplus\cdots\oplus X_n)_{\sigma}$ and $C=A\rtimes X$. We may suppose that $A_k\subseteq A\subseteq C$ and $X_k\subseteq X\subseteq C$, for $k=1,\ldots,n$, so that the module operations of each bimodule $X_k$ and also the ones of the bimodule $X$ are given by the operations of the $C^*$-algebra $C$, i.e., by the restriction of the trivial Hilbert $C-C$ bimodule structure of $C$. Note that the spaces $A,X\subseteq C$ verify the conditions in \ref{power}, then we can define $X^k$ for $k\in\Z$ as done there. Moreover, as $X$ is a Hilbert $A-A$ bimodule (hence, non-degenerate for both actions) and is right full, because each $X_k$ is, we have that $A,X\subseteq C$ verify the conditions of \ref{power2}.

We extend the families $\{A_k\}_{k=1}^n$ and $\{X_k\}_{k=1}^n$ to families $\{A_k\}_{k\in\Z}$ and $\{X_k\}_{k\in\Z}$ letting $A_k=A_l$ and $X_k=X_l$ if $k=l\mod n$. For all $k\in\Z$ we have 
  $$A_kX_k=X_k=X_kA_{k+1},\quad X_kX_k^*\subseteq A_k,\quad\text{and}\quad X_k^*X_k=A_{k+1},$$
because each $X_k$ is a Hilbert $A_k-A_{k+1}$ bimodule (hence non-degenerate for both actions) and right full.
We also have 
$$A_kA_l=A_kX_l=X_kA_{l+1}=0\quad\text{for }k,l\in\Z,\quad k\neq l\mod n,$$
therefore, as $A=\sum_{k=1}^nA_k$ and $X=\sum_{k=1}^nX_k$,
  $$A_k=A_kA=AA_k,\qquad X_k=A_kX=XA_{k+1}\quad\text{for }k\in\Z,$$
and then 
  $$A_kX^l=X^lA_{k+l}\quad\text{for }k,l\in\Z.$$

In particular, for $k\in\Z$ we have that $A_kX^n=X^nA_k$ so that the pairs $A_k$, $A_kX^n$ satisfy the conditions of \ref{power} and \ref{power2}. Following the notation in \ref{power} we define the $C^*$-subalgebra
  $$B=A_1[A_1X^n]=C^*(A_1\cup A_1X^n)\subseteq C,$$
and the closed subspace
  $$Z=BX^0\oplus BX\oplus\cdots\oplus BX^{n-1}\subseteq M_{1\times n}(C),$$
where $M_{1\times n}(C)$ is considered as a Hilbert $C-M_n(C)$ bimodule with the usual matrix operations. To prove the theorem it is enough to show that $ZZ^*\subseteq C$ and $Z^*Z\subseteq M_n(C)$ are $C^*$-subalgebras, that $Z$ is an equivalence  $ZZ^*$-$Z^*Z$ bimodule with the restricted (matrix) operations and that we have isomorphisms $ZZ^*\cong A_1\rtimes(X_1\otimes\cdots\otimes X_n)$ and $Z^*Z\cong C$.

\par Let us consider the issues concerning the left side first. Notice that the equalities $AA_1=A_1=A_1A$ and $A(A_1X^n)=A_1X^n=(A_1X^n)A$ implies, by \ref{conmut}, that $AB=B=BA$, because $B=A_1[A_1X^n]$ by definition.
Then we can calculate
  $$ZZ^*=\Sum_{k=0}^{n-1}(BX^k)(BX^k)^*=B\Bigl(\Sum_{k=0}^{n-1}X^kX^{-k}\Bigr)B=BAB=B,$$
where we use that $\sum_{k=0}^{n-1}X^kX^{-k}=A$, which is a consequence of the relations $X^kX^{-k}\subseteq A$ and $X^0=A$, and that $AB=B$. Besides, form the definition of $Z$ it is apparent that $Z$ is $B$-invariant for the left action. Then we have shown that $Z$ is a full left Hilbert $B$-module. Finally, to see that $B\cong A_1\rtimes(X_1\otimes\cdots\otimes X_n)$, consider the covariant pair $(\varphi, \psi)$ given by
  $$\varphi\colon A_1\to C,\qquad \varphi(a)=a\quad\text{ for }a\in A_1,$$
  $$\psi\colon X_1\otimes\cdots\otimes X_n\to C,\qquad\psi(x_1\otimes\cdots\otimes x_n)=x_1\cdots x_n\quad\text{ for }x_k\in X_k.$$
By the universal property of the crossed product (\ref{univprop}) this covariant pair extends to a $*$-morphism $\varphi\rtimes\psi\colon A_1\rtimes(X_1\otimes\cdots\otimes X_n)\to C$, which is injective because $\varphi$ is (\ref{CPprops}). Moreover, $\Im\varphi\rtimes\psi=C^*(\Im\varphi\cup\Im\psi)=B$ because $\Im\varphi=A_1$, $\Im\psi=X_1\cdots X_n=A_1X\cdots A_nX=A_1X^n$ and $B=C^*(A_1\cup A_1X^n)$ by definition. Hence we have the desired isomorphism.
    
\par Now, turning to the right side, note that $Z^*Z$ is clearly a closed self-adjoint subspace of $M_n(C)$. Form the equalities $ZZ^*=B$ and $BZ=Z$ we also deduce that $(Z^*Z)(Z^*Z)=Z^*BZ=Z^*Z$, so that $Z^*Z$ is a $C^*$-subalgebra of $M_n(C)$ and $Z$ is a full right Hilbert $Z^*Z$-module.

To show that $Z^*Z\cong C$, consider the pair of maps $\varphi\colon A\to M_n(C)$ and $\psi\colon X\to M_n(C)$ given by
\arraycolsep=2.5pt 
\medmuskip = 1mu 
  $$\varphi(a_1,\ldots,a_n)=
  \left[\begin{array}{cccc}
  a_1     &0      &\cdots     &0      \\
  0       &a_2    &\ddots     &\vdots \\
  \vdots  &\ddots &\ddots     &0      \\
  0       &\cdots &0          &a_n
  \end{array}\right],\quad
  \psi(x_1,\ldots,x_n)=
  \left[\begin{array}{cccc}
  0       &x_1     &\cdots     &0      \\
  0       &0       &\ddots     &\vdots \\
  \vdots  &\ddots  &\ddots     &x_{n-1}      \\
  x_n     &\cdots  &0          &0
  \end{array}\right],$$
\medmuskip = 4mu plus 2mu minus 4mu  
for $(a_1,\ldots,a_n)\in A=A_1\oplus\cdots\oplus A_n$ and $(x_1,\ldots,x_n)\in X=(X_1\oplus\cdots\oplus X_n)_{\sigma}$.

The following calculations shows that this pair is a covariant pair. For every $a_k\in A_k$, $x_k,y_k\in X_k$, $k=1,\ldots,n$ we have
\medmuskip = 1mu
\begin{multline*}
  \psi((a_1,\ldots,a_n)\cdot(x_1,\ldots,x_n))=
  \psi(a_1\cdot x_1,\ldots,a_n\cdot x_n)\\
=\left[\scriptstyle\begin{array}{cccc}
  0       &a_1\cdot x_1     &\cdots     &0      \\
  0       &0       &\ddots     &\vdots \\
  \vdots  &\ddots  &\ddots     &a_{n-1}\cdot x_{n-1}      \\
  a_n\cdot x_n     &\cdots  &0          &0
  \end{array}\right]
=\left[\begin{array}{cccc}
  a_1     &0      &\cdots     &0      \\
  0       &a_2    &\ddots     &\vdots \\
  \vdots  &\ddots &\ddots     &0      \\
  0       &\cdots &0          &a_n
  \end{array}\right]
    \left[\begin{array}{cccc}
  0       &x_1     &\cdots     &0      \\
  0       &0       &\ddots     &\vdots \\
  \vdots  &\ddots  &\ddots     &x_{n-1}      \\
  x_n     &\cdots  &0          &0
  \end{array}\right]\\
=\varphi(a_1,\ldots,a_n)\psi(x_1,\ldots,x_n);
\end{multline*}
\begin{multline*}
  \varphi(\p{(x_1,\ldots,x_n)}{(y_1,\ldots,y_n)}_L)=
  \varphi(\p{x_1}{y_1}_L,\ldots,\p{x_n}{y_n}_L)\\
=\left[\begin{array}{cccc}
  \p{x_1}{y_1}_L     &0      &\cdots     &0      \\
  0       &\p{x_2}{y_2}_L    &\ddots     &\vdots \\
  \vdots  &\ddots &\ddots     &0      \\
  0       &\cdots &0          &\p{x_n}{y_n}_L
  \end{array}\right]
=\left[\begin{array}{cccc}
  0       &x_1     &\cdots     &0      \\
  0       &0       &\ddots     &\vdots \\
  \vdots  &\ddots  &\ddots     &x_{n-1}      \\
  x_n     &\cdots  &0          &0
  \end{array}\right]
  \left[\begin{array}{cccc}
  0       &0       &\cdots     &y_n^*  \\
  y_1^*   &0       &\ddots     &\vdots \\
  \vdots  &\ddots  &\ddots     &0      \\
  0       &\cdots  &y_{n-1}^*  &0
  \end{array}\right]\\
=\psi(x_1,\ldots,x_n)\psi(y_1,\ldots,y_n)^*;
\end{multline*}
\begin{multline*}
  \psi((x_1,\ldots,x_n)\cdot_{\sigma}(a_1,\ldots,a_n))=
  \psi((x_1,\ldots,x_n)\cdot \sigma(a_1,\ldots,a_n))\\
=\psi((x_1,\ldots,x_n)\cdot(a_2,\ldots,a_n,a_1))
=\psi(x_1\cdot a_2,\ldots,x_{n-1}\cdot a_n, x_n\cdot a_1)\\
=\left[\begin{array}{cccc}
  0       &x_1\cdot a_2     &\cdots     &0      \\
  0       &0       &\ddots     &\vdots \\
  \vdots  &\ddots  &\ddots     &x_{n-1}\cdot a_n      \\
  x_n\cdot a_1     &\cdots  &0          &0
  \end{array}\right]
=\left[\begin{array}{cccc}
  0       &x_1     &\cdots     &0      \\
  0       &0       &\ddots     &\vdots \\
  \vdots  &\ddots  &\ddots     &x_{n-1}      \\
  x_n     &\cdots  &0          &0
  \end{array}\right]
  \left[\begin{array}{cccc}
  a_1     &0      &\cdots     &0      \\
  0       &a_2    &\ddots     &\vdots \\
  \vdots  &\ddots &\ddots     &0      \\
  0       &\cdots &0          &a_n
  \end{array}\right]\\
=\psi(x_1,\ldots,x_n)\varphi(a_1,\ldots,a_n);
\end{multline*}
\arraycolsep=.5 pt 
\medmuskip = -2mu 
\begin{multline*}
  \varphi(\p{(x_1,\ldots,x_n)}{(y_1,\ldots,y_n)}_R^{\sigma})=
  \varphi(\sigma^{-1}(\p{(x_1,\ldots,x_n)}{(y_1,\ldots,y_n)}_R))\\
=\varphi(\sigma^{-1}(\p{x_1}{y_1}_R,\ldots,\p{x_n}{y_n}_R))
=\varphi(\p{x_n}{y_n}_R,\p{x_1}{y_1}_R,\ldots,\p{x_{n-1}}{y_{n-1}}_R)\\
=\left[\begin{array}{cccc}
  \p{x_n}{y_n}_R     &0      &\cdots     &0      \\
  0       &\p{x_1}{y_1}_R    &\ddots     &\vdots \\
  \vdots  &\ddots &\ddots     &0      \\
  0       &\cdots &0          &\p{x_{n-1}}{y_{n-1}}_R
  \end{array}\right]
=\left[\begin{array}{cccc}
  0       &0       &\cdots     &x_n^*  \\
  x_1^*   &0       &\ddots     &\vdots \\
  \vdots  &\ddots  &\ddots     &0      \\
  0       &\cdots  &x_{n-1}^*  &0
  \end{array}\right]
  \left[\begin{array}{cccc}
  0       &y_1     &\cdots     &0      \\
  0       &0       &\ddots     &\vdots \\
  \vdots  &\ddots  &\ddots     &y_{n-1}      \\
  y_n     &\cdots  &0          &0
  \end{array}\right]\\
=\psi(x_1,\ldots,x_n)^*\psi(y_1,\ldots,y_n).
\end{multline*}
\arraycolsep=2.5pt 
\medmuskip = 4mu plus 2mu minus 4mu  

By the universal property of the crossed product the covariant pair $(\varphi, \psi)$ extends to a $*$-morphism $\varphi\rtimes\psi\colon A\rtimes X\to M_n(C)$, which is injective because $\varphi$ is. Then, to end the proof it suffices to show that $\Im \varphi\rtimes\psi=Z^*Z$.
  
We calculate $Z^*Z\subseteq M_n(C)$ adopting the following matrix notation
  $$Z^*Z=[E_{ ij}]_{i,j=1}^n\quad\text{where}\quad E_{ij}=(BX^{i-1})^*(BX^{j-1})\subseteq C,\quad\text{for }i,j=1,\ldots,n.$$
Simplifying the expressions of the $E_{ij}$'s we get 
  $$E_{ij}=(BX^{i-1})^*(BX^{j-1})=X^{1-i}BX^{j-1},\quad\text{for }i,j=1,\ldots,n.$$
Note that 
  $$E_{ii}=X^{1-i}BX^{i-1}\supseteq X^{1-i}A_1X^{i-1}=X^{1-i}X^{i-1}A_i=A_i\qquad\text{for}\quad i=1,\ldots,n,$$
  $$E_{ii+1}=E_{ii}X\supseteq A_iX=X_i\qquad\text{for}\quad i=1,\ldots,n-1\quad\text{and}$$
  $$E_{n1}=X^{1-n}BX^0\supseteq X^{1-n}(A_1X^n)A=A_nX^{1-n}X^n=A_nX=X_n.$$
Then we see that $\Im\varphi\subseteq Z^*Z$ and $\Im\psi\subseteq Z^*Z$. As $\Im\varphi\rtimes\psi= C^*(\Im\varphi\cup\Im\psi)$ we conclude that $\Im\varphi\rtimes\psi\subseteq Z^*Z.$
  
To prove the reverse inclusion denote $D=\Im\varphi\rtimes\psi= C^*(\Im\varphi\cup\Im\psi)$,
  $$\Im\varphi=
  \left[\begin{array}{cccc}
  A_1     &0      &\cdots     &0      \\
  0       &A_2    &\ddots     &\vdots \\
  \vdots  &\ddots &\ddots     &0      \\
  0       &\cdots &0          &A_n
  \end{array}\right]\subseteq D\quad\text{and}\quad
  \w X=\Im\psi=
  \left[\begin{array}{cccc}
  0       &X_1     &\cdots     &0      \\
  0       &0       &\ddots     &\vdots \\
  \vdots  &\ddots  &\ddots     &X_{n-1}      \\
  X_n     &\cdots  &0          &0
  \end{array}\right]\subseteq D.$$
  
Note that 
$$
 \left[\begin{array}{cccc}
  0       &X_1     &\cdots     &0      \\
  0       &0       &0       &\vdots \\
  \vdots  &\ddots  &\ddots     &0      \\
  0     &\cdots  &0          &0
  \end{array}\right]
  \left[\begin{array}{cccc}
  0       &0     &\cdots     &0      \\
  0       &0       &X_2       &\vdots \\
  \vdots  &\ddots  &\ddots     &0      \\
  0     &\cdots  &0          &0
  \end{array}\right]\cdots
  \left[\begin{array}{cccc}
  0       &0     &\cdots     &0      \\
  0       &0       &\ddots     &\vdots \\
  \vdots  &\ddots  &\ddots     &X_{n-1}      \\
  0     &\cdots  &0          &0
  \end{array}\right]
    \left[\begin{array}{cccc}
  0       &0     &\cdots     &0      \\
  0       &0       &\ddots     &\vdots \\
  \vdots  &\ddots  &\ddots     &0      \\
  X_n     &\cdots  &0          &0
  \end{array}\right]$$
$$
=\left[\begin{array}{cccc}
  X_1\cdots X_n       &0     &\cdots     &0      \\
  0       &0       &0       &\vdots \\
  \vdots  &\ddots  &\ddots     &0      \\
  0     &\cdots  &0          &0
  \end{array}\right]=
  \left[\begin{array}{cccc}
  A_1X^n       &0     &\cdots     &0      \\
  0       &0       &0       &\vdots \\
  \vdots  &\ddots  &\ddots     &0      \\
  0     &\cdots  &0          &0
  \end{array}\right]\subseteq D.
$$
 Then, with $\left[\begin{smallmatrix}A_1 & 0_{1\times(n-1)}\\ 0_{(n-1)\times1} & 0_{(n-1)\times(n-1)} \end{smallmatrix}\right]\subseteq D$ and $\left[\begin{smallmatrix}A_1X^n & 0_{1\times(n-1)}\\ 0_{(n-1)\times1} & 0_{(n-1)\times(n-1)} \end{smallmatrix}\right]\subseteq D$ we can generate $\w E_{11}=\left[\begin{smallmatrix}E_{11} & 0_{1\times(n-1)}\\ 0_{(n-1)\times1} & 0_{(n-1)\times(n-1)} \end{smallmatrix}\right]\subseteq D$, because $E_{11}=B=A_1[A_1X^n]$. Now, for $k,l=0,\ldots,n-1$ we have that the product $\w X^{-k}\w E_{11}\w X^l\subseteq D$ is the space that, with the matrix notation, has $X_k^*\cdots X_1^*BX_1\cdots X_l=X^{-k}BX^l=E_{1+k\,1+l}$ at the $(l+1,k+1)$-entry and $0$ elsewhere. As $(k+1,l+1)$ ranges over all entries when $k,l=0,\ldots,n-1$, we conclude that $Z^*Z=[E_{ij}]_{i,j=1}^n\subseteq D=\Im\varphi\rtimes\psi$ as desired.
\end{proof}

\begin{remark} For the especial case of Theorem \ref{elteo} in which $A_i=A$ and $X_i=X$ for $i=1,\ldots,n$, we obtain $A^n\rtimes X^n_{\sigma}\sim A\rtimes X^{\otimes n}$. As pointed out in the introduction, this can be viewed as a generalization to the $C^*$-bimodule context of Green's theorem \cite[Theorem 4.22]{W} for the case $G=\Z$ and $H=n\Z$. That is, if $\alpha\colon A\to A$ is a $*$-automorphism, taking $X$ as the trivial Hilbert bimodule $_AA{_A}$ twisted by $\alpha$, the equivalence $A^n\rtimes X^n_{\sigma}\sim A\rtimes X^{\otimes n}$ becomes $C_0(G/H,A)\rtimes G\sim A\rtimes_{\alpha|_H}H$ where $\alpha$ also denotes the action of $\Z$ generated by the automorphism.
\end{remark}

\begin{corollary}\label{coro1} Let $_AX_B$ and $_BY_A$ be full right Hilbert bimodules. Then
  $$A\rtimes(X\otimes Y)\sim B\rtimes(Y\otimes X).$$
\end{corollary}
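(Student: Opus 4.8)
The plan is to deduce Corollary \ref{coro1} directly from Theorem \ref{elteo} applied to the length-$2$ cycle $_AX_B$, $_BY_A$. Taking $n=2$, $A_1=A$, $A_2=B$, $X_1=X$, $X_2=Y$ in the theorem gives the Morita equivalence
  $$A\rtimes(X\otimes Y)\sim(A\oplus B)\rtimes(X\oplus Y)_{\sigma},$$
where $\sigma\colon A\oplus B\to B\oplus A$ is the flip $\sigma(a,b)=(b,a)$. Symmetrically, applying Theorem \ref{elteo} to the rotated cycle $_BY_A$, $_AX_B$ (that is, $n=2$, $A_1=B$, $A_2=A$, $X_1=Y$, $X_2=X$) yields
  $$B\rtimes(Y\otimes X)\sim(B\oplus A)\rtimes(Y\oplus X)_{\sigma'},$$
with $\sigma'\colon B\oplus A\to A\oplus B$ again the flip. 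So it remains only to identify the two right-hand sides.

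The key observation is that the ``big'' $C^*$-algebra is insensitive to cyclically permuting the cycle. First I would exhibit an isomorphism $\theta\colon A\oplus B\to B\oplus A$, namely the coordinate flip $\theta(a,b)=(b,a)$, and a compatible linear isomorphism $\Theta\colon(X\oplus Y)_{\sigma}\to(Y\oplus X)_{\sigma'}$, again given by the flip $(x,y)\mapsto(y,x)$ on the underlying vector spaces. I would check that $(\theta,\Theta)$ intertwines the left actions, the right (twisted) actions, and both inner products: concretely, the twisted right inner product on $(X\oplus Y)_{\sigma}$ is $\langle(x,y),(x',y')\rangle^{\sigma}_R=\sigma^{-1}(\langle x,x'\rangle_R,\langle y,y'\rangle_R)=(\langle y,y'\rangle_R,\langle x,x'\rangle_R)$, and a direct comparison shows $\Theta$ carries this to the twisted inner product on $(Y\oplus X)_{\sigma'}$; the twisted right action $(x,y)\cdot_{\sigma}(a,b)=(x\cdot b,y\cdot a)$ transforms likewise. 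Hence $(\theta,\Theta)$ is an isomorphism of Hilbert bimodules, and since the crossed product is functorial for such isomorphisms (compose the covariant pair for the target with $(\theta,\Theta)$ and invoke the universal property \ref{univprop}, doing the same with the inverse to get a two-sided inverse $*$-morphism), we obtain
  $$(A\oplus B)\rtimes(X\oplus Y)_{\sigma}\cong(B\oplus A)\rtimes(Y\oplus X)_{\sigma'}.$$
Chaining the three relations — the equivalence from Theorem \ref{elteo} for the first cycle, this isomorphism, and the equivalence from Theorem \ref{elteo} for the rotated cycle read backwards — and using transitivity of Morita equivalence gives $A\rtimes(X\otimes Y)\sim B\rtimes(Y\otimes X)$.

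The only genuinely delicate point is verifying that an isomorphism of Hilbert $A$--$A$ bimodules (with a possibly nontrivial coefficient isomorphism $\theta$ on one side) induces an isomorphism of crossed products; but this is immediate from the universal property \ref{univprop} together with the fact (\ref{CPprops}) that $\varphi\rtimes\psi$ is injective whenever $\varphi$ is — here $\varphi=\iota_{B\oplus A}\circ\theta$ is injective, so the induced map is injective, and its image is all of the target by the image formula $\Im\varphi\rtimes\psi=C^*(\Im\varphi\cup\Im\psi)$. Everything else is the bookkeeping of flipping two coordinates. An alternative, even shorter route — which I would mention but not pursue in detail — is to observe that $(A\oplus B)\rtimes(X\oplus Y)_{\sigma}$ is manifestly symmetric in the two halves of the cycle by construction, so no separate isomorphism argument is strictly needed; but spelling out the flip isomorphism makes the proof self-contained.
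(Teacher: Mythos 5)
Your proposal is correct and follows essentially the same route as the paper: apply Theorem \ref{elteo} twice with $n=2$ (once to the cycle $X,Y$ and once to the rotated cycle $Y,X$), and link the two middle terms by the coordinate-flip isomorphism of twisted sums, which induces an isomorphism of crossed products via the universal property. Your verification of the flip's compatibility with the twisted right action and inner product, and the injectivity/surjectivity argument for the induced map, are exactly the details the paper leaves implicit.
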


\begin{proof} The twisted sums $(X\oplus Y)_{\sigma}$ and $(Y\oplus X)_{\sigma}$ are isomorphic bimodules, the pair $(\varphi,\psi)$ where $\varphi\colon A\oplus B\to B\oplus A$, $\varphi(a,b)=(b,a)$, and $\psi\colon X\oplus Y\to Y\oplus X$, $\psi(x,y)=(y,x)$, being an isomorphism. Then, the corresponding crossed products are isomorphic as well. Therefore
$$A\rtimes(X\otimes Y)\sim (A\oplus B)\rtimes (X\oplus Y)_{\sigma}\cong (B\oplus A)\rtimes (Y\oplus X)_{\sigma}\sim B\rtimes(Y\otimes X),$$
where we applied twice Theorem \ref{elteo} for $n=2$.
\end{proof}

\begin{corollary} (\cite[Theorem 4.1]{AEE}) \label{coro2} Let $_AX_A$ and $_BY_B$ be full right Hilbert bimodules and $_AM_B$ an equivalence bimodule such that $X\otimes M\cong M\otimes Y$. Then $$A\rtimes X\sim B\rtimes Y.$$
\end{corollary}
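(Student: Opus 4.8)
The plan is to reduce the statement to Corollary \ref{coro1} by absorbing the equivalence bimodule $M$ and its inverse. Let $_B\bar M_A$ denote the inverse (dual) bimodule of the equivalence bimodule $_AM_B$. Recall from Morita theory that $\bar M$ is again an equivalence bimodule, so in particular it is right full over $A$ (this being equivalent to left fullness of $M$ over $A$), and that there are canonical bimodule isomorphisms $M\otimes\bar M\cong{_AA_A}$ and $\bar M\otimes M\cong{_BB_B}$. Set $N=X\otimes M$. Since $X$ is right full over $A$ and $M$ is right full over $B$, the internal tensor product $N={_AN_B}$ is right full over $B$; and by hypothesis $N\cong M\otimes Y$ as $A-B$ bimodules.

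Now apply Corollary \ref{coro1} to the pair of right full Hilbert bimodules $_AN_B$ and $_B\bar M_A$, obtaining the Morita equivalence
  $$A\rtimes(N\otimes\bar M)\sim B\rtimes(\bar M\otimes N).$$
Using associativity of the internal tensor product together with the identifications above, one gets
  $$N\otimes\bar M\cong X\otimes(M\otimes\bar M)\cong X\otimes A\cong X$$
as $A-A$ bimodules, and, using the isomorphism $N\cong M\otimes Y$,
  $$\bar M\otimes N\cong\bar M\otimes(M\otimes Y)\cong(\bar M\otimes M)\otimes Y\cong B\otimes Y\cong Y$$
as $B-B$ bimodules. Finally, an isomorphism of Hilbert $A-A$ bimodules induces an isomorphism of the associated crossed products — this is immediate from the universal property of \ref{univprop}, exactly as used in the proof of Corollary \ref{coro1} — so $A\rtimes(N\otimes\bar M)\cong A\rtimes X$ and $B\rtimes(\bar M\otimes N)\cong B\rtimes Y$, and combining these with the displayed equivalence yields $A\rtimes X\sim B\rtimes Y$.

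The bulk of the argument is formal bookkeeping with internal tensor products. The one point that genuinely needs care — and where the hypothesis that $M$ is an \emph{equivalence} bimodule, rather than merely right full, is used — is verifying that the bimodules fed into Corollary \ref{coro1} satisfy its fullness requirement: we need $\bar M$ to be right full over $A$, i.e. $M$ to be left full over $A$, and we need the cancellation isomorphisms $M\otimes\bar M\cong A$ and $\bar M\otimes M\cong B$. Everything else, including the verification that $X\otimes M$ is right full over $B$ and that $X\otimes A\cong X$ (by non-degeneracy of the right action), is routine.
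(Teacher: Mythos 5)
Your proof is correct and follows essentially the same route as the paper: both reduce to Corollary \ref{coro1} by tensoring with $M$ and its conjugate bimodule and cyclically commuting the factors, the only difference being the grouping (you commute $X\otimes M$ past $\bar M$, while the paper commutes $M$ past $M^*\otimes X$). Your explicit attention to the fullness hypotheses needed for Corollary \ref{coro1} is a welcome addition that the paper leaves implicit.
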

\begin{proof} As $M$ is an equivalence we have $A\cong M\otimes M^*$, where $A$ is considered as the trivial Hilbert $A-A$ bimodule and $M^*$ denotes the conjugated bimodule of $M$. Then $X\cong A\otimes X\cong M\otimes M^*\otimes X$. Besides, $M^*\otimes X\otimes M\cong Y$ by hypothesis. Then, as all these isomorphisms give isomorphic crossed products, we have
 $$A\rtimes X\cong A\rtimes (A\otimes X)\cong A\rtimes (M\otimes M^*\otimes X)\sim B\rtimes (M^*\otimes X\otimes M)\cong B\rtimes Y,$$
 where we applied Corollary \ref{coro1} to commute $M$ and $M^*\otimes X$.
\end{proof}

\begin{remark} In \cite{AA} the augmented Cuntz-Pimsner $C^*$-algebra $\w{\O}_X$ associated to an $A-A$ correspondence $X$ (see \cite{P}) is described as a crossed product $A_{\infty}\rtimes X_{\infty}$, where $X_{\infty}$ is a Hilbert $A_{\infty}-A_{\infty}$ bimodule constructed out of the $A-A$ correspondence $X$. Then, combining this description with \cite[Theorem 4.1]{AEE} (Corollary \ref{coro2} here) it is shown an analogue of this theorem in the context of augmented Cuntz-Pimsner $C^*$-algebras (\cite[Theorem 4.7]{AA}). 

We believe that using similar techniques to those of \cite{AA}, it is possible to obtain versions of Theorem \ref{elteo} and Corollary \ref{coro1} for augmented Cuntz-Pimsner algebras. For example, the corresponding version of the Corollary \ref{coro1} should establish that $\w{\O}_{X\otimes Y}\sim\w{\O}_{Y\otimes X}$ for full correspondences $_AX_B$ and $_BY_A$.
\end{remark}

{\bf Acknowledgement.} The author wishes to thank his friend Janine Bachrachas for her help editing this article.

\bibliographystyle{amsplain}

\begin{thebibliography}{99}

\bibitem{AA} Abadie, Beatriz; Achigar, Mauricio. {\it Cuntz-Pimsner $C^*$-Algebras and Crossed Products by Hilbert $C^*$-Bimodules}, Rocky Mountain J. Math. 39 (2009), no. 4, 1051--1081.

\bibitem{AEE} Abadie, Beatriz; Eilers, S\o{}ren; Exel, Ruy. {\it Morita equivalence for crossed products by Hilbert $C^*$-bimodules}, Trans. Amer. Math. Soc. 350 (8) (1998), 3043-3054.

\bibitem{La} Lance, E. C. {\it Hilbert $C^*$-Modules: A Toolkit for Operator Algebraists}, Cambridge, England: Cambridge University Press, 1995.

\bibitem{P} Pimsner, Michael V. {\it A class of $C^*$-algebras generalizing both Cuntz-Krieger algebras and crossed products by $\Z$}, Free probability theory (Waterloo, ON, 1995) Fields Inst. Commun., vol. 12, Amer. Math. Soc., Providence, RI, 1997, pp. 189--212. MR 1426840 (97k:46069)

\bibitem{W} Williams, Dana P. {\it Crossed products of $C^*$-algebras}, Mathematical Surveys and Monographs,
vol. 134, American Mathematical Society, Providence, RI, 2007.
\end{thebibliography}

\end{document}